\newtheorem{theorem}{Theorem}
\newtheorem{proposition}[theorem]{Proposition}
\newtheorem{lemma}[theorem]{Lemma}
\theoremstyle{definition}
\newtheorem{definition}[theorem]{Definition}
\newtheorem{remark}[theorem]{Remark}
\let\OLDthebibliography\thebibliography
\renewcommand\thebibliography[1]{
	\OLDthebibliography{#1}
	\setlength{\parskip}{1pt}
	\setlength{\itemsep}{1pt plus 0.3ex}
}
\numberwithin{equation}{section} 
\numberwithin{theorem}{section}
\DeclarePairedDelimiter\norm{\lVert}{\rVert}%
\let\oldnorm\norm
\def\norm{\@ifstar{\oldnorm}{\oldnorm*}}
\newcommand{\al} {\alpha}
\newcommand{\De} {\Delta}
\newcommand{\om} {\omega}
\newcommand{\Om} {\Omega}
\newcommand{\la} {\lambda}
\newcommand{\La} {\Lambda}
\newcommand{\ep} {\epsilon}
\newcommand{\ra} {\rightarrow}
\newcommand{\intRn}{\displaystyle{\int_{\mathbb{R}^N}}}
\newcommand\restr[2]{{
  \left.\kern-\nulldelimiterspace 
  #1 
  \right|_{#2} 
  }}
\def\A{{\mathcal A}}
\def\C{{\mathcal C}}
\def\Ds{{\mathscr{D}}}
\def\J{{\mathcal J}}
\def\R{{\mathbb R}}
\def\({{\Big(}}
\def\){{\Big)}}
\def\c1{{\C_c^1}}
\def\dz{{\rm d}z}
\def\dx{{\rm d}x}
\def\dy{{\rm d}y}
\def\cns{{C_{N,s}}}
\def\Hsrn{{H^s(\mathbb{R}^N)}}
\def\uss{{[u]_{s}^2}}
\def\xsom{{H^s_0(\Omega)}}
\def\into{{\displaystyle \int_{\Omega}}}
\def\uxys{{\frac{|u(x)-u(y)|^2}{|x-y|^{N+2s}}}}
\date{}
\begin{document}
\title[Isoperimetric inequalities]{Isoperimetric inequalities for the fractional composite membrane problem}

\author[Mrityunjoy Ghosh]{Mrityunjoy Ghosh \orcidlink{0000-0003-0415-2821}}

\address{Tata Institute of Fundamental Research,
Centre for Applicable
Mathematics\\
Post Bag No. 6503, Sharadanagar,
Bangalore 560065, India.}

\email{ghoshmrityunjoy22@gmail.com}

\subjclass[2020]{35R11, 35P15, 49Q10, 49R05}

\keywords{Composite membrane problem, Fractional Laplacian, Faber-Krahn inequality, Lieb inequality}


\begin{abstract}

In this article, we investigate  some isoperimetric-type inequalities related to the first eigenvalue of the fractional composite membrane problem. First, we establish an analogue of the renowned Faber-Krahn inequality for the fractional composite membrane problem. Next, we investigate an isoperimetric inequality for the first eigenvalue of the fractional composite membrane problem on the intersection of two domains-a problem that was first studied by Lieb \cite{Lieb1983} for the Laplacian. Similar results in the local case were previously obtained by Cupini-Vecchi \cite{Vecchi2019} for the composite membrane problem. Our findings provide further insights into the fractional setting, offering a new perspective on these classical inequalities.

\end{abstract}

\maketitle


\section{Introduction}
Let $\Om\subset \R^N$ $(N\geq 2)$ be a bounded, Lipschitz domain. For $s\in (0,1)$ and a measurable set $D\subset \Om,$ we consider the following eigenvalue problem for the fractional Laplacian with Dirichlet boundary conditions:
\begin{equation}\tag{$\mathcal{P}$}\label{Problem}
    \left\{\begin{aligned}
    (-\De)^s u +\al \chi_D u &=\la u \quad\; \text{in}\; \Om,\\
    u &=0\quad\;\;\; \text{on}\; \Om^c,
\end{aligned}\right.
\end{equation}
where $\al>0,\la\in \R$ and $\chi_D$ denotes the characteristic function of $D.$ It is well-known that there exists a least positive real number $\la_{\Om}(\al,D),$ which is the first eigenvalue of \eqref{Problem}, having the following characterization; cf. \cite[Section 3]{BonderPezzo}:
\begin{equation}\label{la1}
    \la_{\Om}(\al,D)=\inf_{u\in H^s_0(\Om)\setminus \{0\}}\left\{\frac{\frac{\cns}{2}\uss+\al \into \chi_{D}|u|^2\dx}{\into |u|^2\dx}\right\},
\end{equation}
where $\cns$ is a normalization constant, $H^s_0(\Om)$ is the fractional Sobolev space (see Section \ref{Frac_sp} for definitions) and $[u]_s$ is the Gagliardo semi-norm of $u$ given by
$$[u]_s:=\left(\intRn\intRn\uxys\dx\dy\right)^{\frac{1}{2}}.$$
Moreover, $\la_{\Om}(\al,D)$ is attained for some $u\in H^s_0(\Om).$

For a fixed $c\in[0,|\Om|],$ let us denote by $\Ds_{c,\Om}$ the following collection of subsets of $\Om$:
\begin{equation}\label{D_c}
    \Ds_{c,\Om}=\{D\subset\Om: D\;\text{is measurable and}\;|D|=c\},
\end{equation}
where $|\cdot|$ is the $N$-dimensional Lebesgue measure. Let $\La_{\Om}(\al,c)$ be defined as
\begin{equation}\label{Lam_om}
    \La_{\Om}(\al,c)=\inf_{D\in \Ds_{c,\Om}}\{\la_{\Om}(\al,D)\}.
\end{equation}
The optimization problem \eqref{Lam_om} is commonly known as the \textit{composite membrane problem}; cf. \cite{Chanillo2000}. We know that there exists $D\in \Ds_{c,\Om}$ such that $\La_{\Om}(\al,c)$ is achieved (see Proposition \ref{prop:property}) and we call it as an \textit{optimal domain}. For such optimal domain $D,$ if $u\in H^s_0(\Om)$ is an associated first eigenfunction of \eqref{Problem}, then we call $(u,D)$ an \textit{optimal pair} for the minimzation problem \eqref{Lam_om}. Observe that the optimal domains are not unique in general, which follows from the asymmetry in some domains; see, for instance, \cite[Theorem 1.4]{LeeLee}. However, the uniqueness might occur in some scenario; for example, if $\Om$ is a ball \cite[Corollary 1.3]{LeeLee}.

The composite membrane problem has extensive applications in diffusive logistic equations, population dynamics, etc. We refer to \cite{Chanillo2000,UNm, MG2023,Verzini2020, MazariDavoli, Lamboley, Yanagida, Kao2021, Colasuonno2019, Chanillo2008, Shahgholian} and the references therein for more applications along with recent developments about the composite membrane problem in the local case. However, the composite membrane problem in the non-local case has not been investigated much in the literature except a few; see, \cite{LeeLee, BonderPezzo, Vecchi2019, Bonder2021}.

The aim of the present article is to provide some isoperimetric-type inequalities for the fractional composite membrane problem. Firstly, we study Faber-Krahn type inequality for $\La_\Om(\al,c)$ in the same spirit as the classical one; cf. \cite{Faber, Krahn1926} or \cite[Chapter 3]{Henrot2006}. Secondly, we investigate the isoperimetric-type inequality related to $\La_\Om(\al,c)$ for the intersection of two domains. Such a result was first studied by Lieb \cite{Lieb1983} for the first eigenvalue of the Laplacian in the classical case.

\subsection{Faber-Krahn inequality} 
In \cite[Theorem 1.1]{Vecchi2019}, the authors considered the local version of \eqref{Problem} and they have established an analogue of the celebrated \textit{Faber-Krahn inequality} for the composite membrane problem. In this section, we derive a similar result for the non-local eigenvalue problem \eqref{Problem} for the fractional Laplacian.
\begin{theorem}\label{theo:Faber}
     Let $\Om\subset \R^N$ be a bounded, Lipschitz domain and $\Om^*$ be the ball centered at the origin satisfying $|\Om|=|\Om^*|$. Suppose $c\in (0,|\Om^*|)$ and $\al>0$ are given. Then, we have the following
    \begin{equation}\label{FaberKrahn_ineq}
        \La_{\Om^*}(\al,c)\leq \La_{\Om}(\al,c),
    \end{equation}
    where $\La_{\Om}(\al,c)$ is defined by \eqref{Lam_om}. Furthermore, equality occurs in \eqref{FaberKrahn_ineq} if and only if $\Om=\Om^*$ (up to translation).
\end{theorem}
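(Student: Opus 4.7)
The plan is to apply Schwarz symmetrization together with the fractional P\'olya--Szeg\H{o} inequality, in the spirit of the local argument of Cupini--Vecchi. Let $(u,D)$ be an optimal pair for $\La_\Om(\al,c)$, and let $u^*$ denote the symmetric decreasing rearrangement of $|u|$, viewed as an element of $H^s_0(\Om^*)$. Define the competitor set $D^*:=\Om^*\setminus B$, where $B$ is the ball centered at the origin with $|B|=|\Om^*|-c$, so that $|D^*|=c$ belongs to $\Ds_{c,\Om^*}$.

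Three classical rearrangement facts combine to control the Rayleigh quotient \eqref{la1} at the trial pair $(u^*,D^*)$. First, equimeasurability gives $\int_{\Om^*}|u^*|^2\dx=\int_\Om|u|^2\dx$. Second, the fractional P\'olya--Szeg\H{o} inequality yields $[u^*]_s\le[u]_s$. Third, writing $\int_\Om\chi_D|u|^2\dx=\int_\Om|u|^2\dx-\int_\Om\chi_{\Om\setminus D}|u|^2\dx$ and applying the Hardy--Littlewood inequality to the subtracted term (whose indicator has symmetric decreasing rearrangement $\chi_B$) gives
$$\int_{\Om^*}\chi_{D^*}|u^*|^2\dx\le\int_\Om\chi_D|u|^2\dx.$$
Inserting these three bounds into \eqref{la1} produces
$$\La_{\Om^*}(\al,c)\le\la_{\Om^*}(\al,D^*)\le\la_\Om(\al,D)=\La_\Om(\al,c),$$
which is \eqref{FaberKrahn_ineq}.

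For the rigidity statement, if equality holds in \eqref{FaberKrahn_ineq} then all three inequalities above must be saturated; in particular $[u^*]_s=[u]_s$. I would invoke the sharp characterization of equality in the fractional P\'olya--Szeg\H{o} inequality (due to Frank--Seiringer) to conclude that $u$ coincides, up to translation, with $u^*$, so that $u$ is radially symmetric about some point. A strong maximum principle for the linear nonlocal operator $(-\De)^s+\al\chi_D$ applied to the first eigenfunction gives $u>0$ in $\Om$, whence $\operatorname{supp}u=\overline{\Om}$; the radial symmetry of $u$ then forces $\Om$ itself to be a ball, i.e., $\Om=\Om^*$ up to translation.

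The main obstacle is the rigidity step: whereas in the local case Cupini--Vecchi can invoke the Brothers--Ziemer characterization of equality in the classical P\'olya--Szeg\H{o} inequality, the nonlocal analogue requires ruling out flat parts of $u^*$ in the interior of $\Om^*$, which in turn needs non-degeneracy information for the first eigenfunction of the fractional composite membrane operator—something that must be extracted from the eigenvalue equation and the strict positivity of $u$, and is genuinely more delicate than the local setting because of the nonlocal character of the Gagliardo seminorm and the presence of the potential term $\al\chi_D$.
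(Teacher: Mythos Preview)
Your argument is correct and matches the paper's proof almost verbatim: take an optimal pair $(u,D)$, pass to $(u^*,D_*)$ with $D_*=\Om^*\setminus B$ (equivalently $\chi_{D_*}=(\chi_D)_*$), and combine equimeasurability of the $L^2$ norm, Hardy--Littlewood, and the fractional P\'olya--Szeg\H{o} inequality. For rigidity, both you and the paper invoke the Frank--Seiringer characterization of equality in the fractional P\'olya--Szeg\H{o} inequality.

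Your final paragraph, however, reflects a misconception about where the difficulty lies. The situation is precisely the \emph{opposite} of what you suggest: the nonlocal rigidity step is \emph{easier} than the local one, not harder. The Frank--Seiringer result (Lemma~\ref{polya} in the paper) asserts that equality $[u^*]_s=[u]_s$ forces $u$ to be a translate of a symmetric decreasing function \emph{unconditionally}---there is no hypothesis on the absence of flat parts, no analogue of the Brothers--Ziemer non-degeneracy condition $|\{\nabla u^*=0\}\cap\{0<u^*<\esssup u^*\}|=0$. This is a genuine structural advantage of the Gagliardo seminorm over the Dirichlet integral. Consequently, no information about the eigenvalue equation or the potential $\al\chi_D$ is needed beyond the sign of $u$, and the paper obtains the rigidity for every $\al>0$, whereas Cupini--Vecchi in the local case must restrict to $\al<\bar\al_\Om(c)$ precisely to verify the Brothers--Ziemer hypothesis. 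So the ``main obstacle'' you anticipate simply does not arise.
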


 The approach to prove Theorem \ref{theo:Faber} is based on standard symmetrization arguments, which is also used in the local case. Although the method we follow is close to the local case (see \cite{Vecchi2019}), there will be some differences while deriving the equality case. The inequality part in Theorem \ref{theo:Faber} comes from the Hardy-Littlewood and fractional Polya-Sze\"{g}o inequality related to Schwarz symmetrization. In order to get the rigidity result in Theorem \ref{theo:Faber}, we use the characterizations of the equality case of fractional Polya-Sze\"{g}o inequality, thanks to \cite[Theorem A.1]{Frank2018}. The novelty of this approach is that our proof works for any $\al>0$, which is unlike the local case. Precisely, in the local case, the rigidity result holds provided the parameter $
 \al<\bar{\al}_{\Om}(c),$ where $\bar{\al}_{\Om}(c)$ is the unique number satisfying $\La_{\Om}(\bar{\al}_{\Om}(c),c)=\bar{\al}_{\Om}(c)$ (cf. \cite[Proposition 10]{Chanillo2000}). Such restriction to the range of the parameter $\al$ appears when one needs to use the fact that $\La_{\Om}(\al,c)>\al$ for $\al<\bar{\al}_{\Om}(c)$; see, for instance \cite[Remark 5]{Vecchi2019}. However, we do not employ this fact in our proof thereby leading to a more general approach that does not depend on the parameter $\al.$

\subsection{Lieb inequality} In this section, we are interested in Lieb type (see \cite{Lieb1983} for the Laplacian and \cite{Duong} for the fractional Laplacian) inequality for the fractional composite membrane problem. First, we introduce some notations. 

For $i=1,2,$ let $\Om_i\subset \R^N$ be two bounded, Lipschitz domain. Suppose $\al_i>0$ and $c_i\in (0,|\Om_i|),$ for $i=1,2,$ are given. By Proposition \ref{prop:property}, we know that $\La_{\Om_i}(\al_i,c_i)$ is achieved. For $x\in \R^N,$ let us define the translation of $\Om_2$ by $x$ as below:
\begin{equation}\label{translation}
    \Om_{2,x}:=\{y\in \R^N: y=z+x,\;\text{for some}\;z\in\Om_2\}.
\end{equation}

\begin{theorem}\label{theo:Lieb}
    Let $\Om_i,$ $\al_i$ and $c_i$ ($i=1,2$) be as given above. Also, let $D_i\in \Ds_{c_i,\Om_i}$ be an optimal domain for \eqref{Lam_om} on $\Om_i,$ where $\Ds_{c_i,\Om_i}$ is given by \eqref{D_c}. Then there exists a set $\A\subset \R^N$ of positive measure such that $|\Om_1\cap \Om_{2,x}|>0,$ for a.e. $x\in \A$ and 
    $$\La_{\Om_1\cap \Om_{2,x}}(\al,c)<\La_{\Om_1}(\al_1,c_1)+\La_{\Om_2}(\al_2,c_2),$$
    for each $\al\in (0,\al_1+\al_2)$ and $c\in (0,|D_1\cap D_{2,x}|),$ where $\Om_{2,x}$ is defined by \eqref{translation}.
\end{theorem}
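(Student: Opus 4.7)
The plan is to adapt Lieb's sliding-product test-function argument to the fractional composite membrane setting. Let $(u_1,D_1),(u_2,D_2)$ be optimal pairs for $\La_{\Om_1}(\al_1,c_1)$ and $\La_{\Om_2}(\al_2,c_2)$; normalize $\|u_i\|_{L^2}=1$, take $u_i\ge 0$, and note $u_i\in L^\infty(\RN)$ by standard regularity for first eigenfunctions of the fractional Laplacian with bounded potential on a Lipschitz domain. For $x\in\RN$ define
\[
v_x(y):=u_1(y)\,u_2(y-x),
\]
which is supported in $\Om_1\cap\Om_{2,x}$ and, by the $L^\infty$ bounds on the factors together with a nonlocal Leibniz-type estimate, belongs to $H^s_0(\Om_1\cap\Om_{2,x})$ for a.e.\ $x$.

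The core step is to integrate the Rayleigh-quotient numerator of $v_x$ over translations. Using
\[
u_1(y)u_2(y-x)-u_1(z)u_2(z-x)=u_1(y)\bigl[u_2(y-x)-u_2(z-x)\bigr]+u_2(z-x)\bigl[u_1(y)-u_1(z)\bigr],
\]
squaring, and applying Fubini with the translation change of variable $w=y-x$, a direct computation (combining the surviving cross term with one of the diagonal terms) yields the symmetric identity
\[
\int_{\RN}[v_x]_s^2\,\dx=\iint_{\RN\times\RN}\frac{\rho(y-z)\,u_1(y)u_1(z)+\|u_2\|_{L^2}^2\,(u_1(y)-u_1(z))^2}{|y-z|^{N+2s}}\,\dy\,\dz,
\]
where $\rho(\xi):=\int_{\RN}|u_2(w)-u_2(w+\xi)|^2\,\d w\ge 0$. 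The second summand is exactly $\|u_2\|_{L^2}^2[u_1]_s^2$. For the first, the convexity bound $u_1(y)u_1(z)\le\tfrac12(u_1(y)^2+u_1(z)^2)$, the symmetry of $\rho$ and of $|y-z|$, and the identity $\int_{\RN}\rho(\xi)|\xi|^{-N-2s}\d\xi=[u_2]_s^2$ yield domination by $\|u_1\|_{L^2}^2[u_2]_s^2$, \emph{strictly}, because $u_1$ is nonconstant on a positive-measure set of pairs $(y,z)$ and $\rho(\xi)>0$ for every $\xi\ne 0$ (the nonzero compactly supported $u_2$ cannot equal a nontrivial translate). Hence $\int_{\RN}[v_x]_s^2\,\dx<[u_1]_s^2+[u_2]_s^2$.

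For the lower-order term, take any measurable $D\subset D_1\cap D_{2,x}$ with $|D|=c$. The elementary pointwise inequality
\[
\al\,\chi_D(y)\le\al_1\,\chi_{D_1}(y)+\al_2\,\chi_{D_{2,x}}(y),\qquad \al\le\al_1+\al_2,
\]
(check on the four regions cut out by $D_1$ and $D_{2,x}$), together with Fubini and translation invariance, gives $\int_{\RN}\al\int_{\RN}\chi_D\,v_x^2\,\dy\,\dx\le\al_1\int_{D_1}u_1^2\,\dy+\al_2\int_{D_2}u_2^2\,\dy$. Adding this to the Gagliardo bound and using $\int_{\RN}\|v_x\|_{L^2}^2\,\dx=\|u_1\|_{L^2}^2\|u_2\|_{L^2}^2=1$, we arrive at the strict integrated inequality
\[
\int_{\RN}\Bigl(\tfrac{\cns}{2}[v_x]_s^2+\al\!\int_{\RN}\chi_D\,v_x^2\Bigr)\dx<\La_{\Om_1}(\al_1,c_1)+\La_{\Om_2}(\al_2,c_2).
\]

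To finish, observe that the integrand above is dominated uniformly in $\al\le\al_1+\al_2$, $c$, and $D\subset D_1\cap D_{2,x}$ by
\[
H(x):=\tfrac{\cns}{2}[v_x]_s^2+\al_1\!\int_{D_1}\!u_1^2(y)u_2^2(y-x)\,\dy+\al_2\!\int_{D_{2,x}}\!u_1^2(y)u_2^2(y-x)\,\dy,
\]
which is independent of the choice of $\al,c,D$. Setting $\mathcal{A}:=\{x\in\RN:\|v_x\|_{L^2}>0\text{ and }H(x)<(\La_{\Om_1}(\al_1,c_1)+\La_{\Om_2}(\al_2,c_2))\|v_x\|_{L^2}^2\}$, the strict integrated inequality forces $|\mathcal{A}|>0$; by the strong maximum principle for the first eigenfunction, $\|v_x\|_{L^2}>0$ is equivalent to $|\Om_1\cap\Om_{2,x}|>0$. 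For every $x\in\mathcal{A}$, every $\al\in(0,\al_1+\al_2)$ and every $c\in(0,|D_1\cap D_{2,x}|)$, the function $v_x$ together with any $D\subset D_1\cap D_{2,x}$ of measure $c$ is an admissible competitor for $\La_{\Om_1\cap\Om_{2,x}}(\al,c)$, whose Rayleigh quotient is at most $H(x)/\|v_x\|_{L^2}^2<\La_{\Om_1}(\al_1,c_1)+\La_{\Om_2}(\al_2,c_2)$. The main obstacle is the cross-term identity: unlike the local case, where integration by parts makes the cross term vanish pointwise, here it persists and must be rewritten and dominated via convexity and the evenness of $\rho$, with the strict inequality resting on $\rho(\xi)>0$ for $\xi\ne 0$ (a compact-support argument) combined with the nonconstancy of $u_1$. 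A secondary technical point is the admissibility $v_x\in H^s_0(\Om_1\cap\Om_{2,x})$, which follows from the $L^\infty$ bounds on eigenfunctions and the standard characterization of $H^s_0$ via extension by zero.
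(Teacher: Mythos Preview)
Your argument is correct and differs from the paper's chiefly in how the \emph{strict} inequality is produced. Both proofs use Lieb's product test function $v_x(y)=u_1(y)u_2(y-x)$, treat the potential term via the pointwise bound $\al\chi_D\le\al_1\chi_{D_1}+\al_2\chi_{D_{2,x}}$, and integrate over translations. For the seminorm part the paper invokes its auxiliary lemma claiming the \emph{equality} $\int_{\RN}[v_x]_s^2\,\dx=[u_1]_s^2+[u_2]_s^2$; this yields only the non-strict bound $\int_{\RN}(W-\La U)\,\dx\le 0$, and the paper then rules out $W=\La U$ a.e.\ in a separate step by a fractional Faber--Krahn argument (for $x$ with $0<|\Om_1\cap\Om_{2,x}|<\ep$ the first Dirichlet eigenvalue of $(-\De)^s$ on the intersection blows up as $\ep\to0$, contradicting $\le\La$). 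You instead establish the exact formula
\[
\int_{\RN}[v_x]_s^2\,\dx=\|u_2\|_{L^2}^2\,[u_1]_s^2+\iint_{\RN\times\RN}\frac{\rho(y-z)\,u_1(y)u_1(z)}{|y-z|^{N+2s}}\,\dy\,\dz
\]
and dominate the second summand strictly by $\|u_1\|_{L^2}^2[u_2]_s^2$ via $u_1(y)u_1(z)\le\tfrac12(u_1(y)^2+u_1(z)^2)$, the strictness coming from $\rho(\xi)>0$ for every $\xi\ne0$ together with the nonconstancy of $u_1$. This gives $\int_{\RN}H\,\dx<\La$ directly, so the positive-measure set $\A$ arises without any Faber--Krahn input; your majorant $H(x)$, being independent of $\al,c,D$, then delivers the conclusion uniformly in $\al$ and $c$, whereas the paper first fixes $(\al_1+\al_2,\,|D_1\cap D_{2,x}|)$ and only afterwards invokes the monotonicity of $\La_\Om(\al,c)$. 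Your route is more self-contained; the paper's route is more modular and would still go through even if one knew only the non-strict seminorm bound. As a side remark, your identity in fact shows that the equality claimed in the paper's lemma cannot hold in general: the inner integral $\J$ there equals $-\tfrac12\rho(y-z)$, which is strictly negative for $y\ne z$, so only $\int_{\RN}[v_x]_s^2\,\dx\le[u_1]_s^2+[u_2]_s^2$ is true---but that inequality is all the paper's main argument actually uses.
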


In \cite[Theorem 1.2]{Vecchi2019}, the authors provided an analogue of the aforementioned Lieb inequality for the composite membrane problem in the local setting. We establish an analogous Lieb-type inequality in the fractional case for $\La_\Om(\al,c)$ related to \eqref{Problem}. The proof of the above theorem will be based on adapting the elementary approach that was introduced by Lieb \cite{Lieb1983}. The main idea is to produce a suitable test function by taking the product of the first eigenfunctions of $\Om_i$ ($i=1,2$) with translations. However, to do so, there will be some difficulties due to the non-local nature of the underlying operator; see, for instance, Section \ref{sec:Lieb} (Lemma \ref{Estimate}).

The remainder of the article is structured as follows. In the next section, we recall some preliminary facts about the fractional Sobolev spaces and symmetrizations. The proofs of Theorem \ref{theo:Faber} and Theorem \ref{theo:Lieb} are presented in Section \ref{sec:Main}.

\section{Preliminaries}
In this section, we review several auxiliary results that will serve as essential tools in the derivation of our main results.

\subsection{Fractional Sobolev spaces}\label{Frac_sp} 
For $s\in (0,1),$ let
\begin{equation*}
[u]_s :=\left(\intRn\intRn\uxys\dx\dy\right)^{\frac{1}{2}}
\end{equation*}
be the Gagliardo semi-norm of $u$ in $\R^N.$
The fractional Sobolev space $\Hsrn$ is defined as follows:
\begin{equation}\label{Sobolev_sp}
\begin{aligned}
    \Hsrn =\left\{u \in L^2(\R^N): [u]_s^2 <\infty\right\}.
    \end{aligned}
\end{equation}
For $u,v\in \Hsrn,$ let $\langle u,v\rangle_{\Hsrn}$ be given by 
\begin{equation*}\label{inner_pr}
    \langle u,v\rangle_{\Hsrn}:= \intRn u(x)v(x)\dx + \intRn\intRn \frac{(u(x)-u(y))(v(x)-v(y))}{|x-y|^{N+2s}}\dx\dy.
\end{equation*}
One can easily verify that $\langle \cdot,\cdot\rangle_{\Hsrn}$ defines an inner product on $\Hsrn$. Let $||\cdot||_{\Hsrn}$ be the norm induced by the aforementioned inner product, i.e., for $u\in \Hsrn,$ we have
\begin{equation*}
||u||_{\Hsrn}=\left(||u||^2_{L^2(\R^N)}+[u]_s^2\right)^\frac{1}{2}.
\end{equation*}
Then $\Hsrn$ is a Hilbert space with respect to the norm $||\cdot||_{\Hsrn}$; see, for instance, \cite[Section 1.2.2]{Radulescu} or \cite[Lemma 6 and Lemma 7]{Servadei2012}. Let $\Om\subset \R^N$ be a bounded domain. We define the Sobolev space $\xsom$, which is a subspace of $\Hsrn,$ as below
\begin{equation*}
    \xsom =\{u \in \Hsrn: u\equiv 0 \;\text{in }\;\Om^c\}.
\end{equation*}
 Observe that, for $u\in \xsom,$ we have 
\begin{equation*}\label{hs_norm}
||u||_{\xsom}:=||u||_{\Hsrn}=\left(||u||^2_{L^2(\Om)}+[u]_s^2\right)^\frac{1}{2}.
\end{equation*}
Let us recall some known properties of the 
Sobolev space $\xsom$ defined above. The space $\xsom$ is a Hilbert space with respect to the norm $||\cdot||_{\xsom};$ see, for instance, \cite[Lemma 1.28 and Lemma 1.29]{Radulescu}. Furthermore, the embedding $\xsom\hookrightarrow L^r(\Om)$ is compact for all $r\in[1,2_s^*),$ where $2_s^*:=\frac{2N}{N-2s}$ is the fractional critical Sobolev exponent of $2,$ and the embedding $\xsom\hookrightarrow L^{2_s^*}(\Om)$ is continuous; cf. \cite[Lemma 9]{Servadei2015} or \cite[Lemma 1.31]{Radulescu}.

\subsection{Symmetrization} We start this section with some elementary definitions and notations regarding symmetrization; we refer to \cite{Kesavan,LossLieb,Baernstein} for more details.
For a given function $f:\Om\ra \R,$ we use the following notations:
$$\left\{f>t\right\}:=\{x\in \Om:f(x)>t\}\;\text{and}\;\left\{f<t\right\}:=\{x\in \Om:f(x)<t\}.$$
Throughout this section, we assume 
$|\left\{f>t\right\}|<+\infty,$ for each $t>\text{ess inf}(f)$.

\begin{definition}[Rearrangement]
Let $\Om$ be a measurable set and $f:\Om\ra \R$ be a measurable function.
\begin{enumerate}[(i)]
    \item The \textit{one-dimensional decreasing rearrangement} $f^\#:[0,|\Om|]\ra [-\infty,+\infty]$ of $f$ is defined as below:
    \begin{align*}
 f^\#(\xi)= \begin{cases} 
      \text{ess sup}(f), & \text{if}\;\xi=0, \\
      \inf\{t:|\left\{f>t\right\}|<\xi\}, & \text{if}\;0<\xi<|\Om|,\\
      \text{ess inf}(f), & \text{if}\;\xi=|\Om|.
   \end{cases}
\end{align*}

\item The \textit{one-dimensional increasing rearrangement} $f_\#:[0,|\Om|]\ra [-\infty,+\infty]$ of $f$ is defined as follows: 
\begin{align*}
 f_\#(\xi)= \begin{cases} 
      \text{ess inf}(f), & \text{if}\;\xi=0, \\
      \inf\{t:|\left\{f<t\right\}|>\xi\}, & \text{if}\;0<\xi<|\Om|,\\
      \text{ess sup}(f), & \text{if}\;\xi=|\Om|.
   \end{cases}
\end{align*}
\end{enumerate}
\end{definition}

\begin{definition}[Schwarz symmetrization]
    Let $\Om$ be a measurable set. Also, let $\Om^*$ be the ball centered at the origin such that $|\Om^*|=|\Om|$ if $|\Om|<+\infty$ and $\Om^*=\R^N$ if $|\Om|=+\infty.$ The \textit{decreasing Schwarz symmetrization} $f^*$ and \textit{increasing Schwarz symmetrization} $f_*$ are defined as 
    $$f^*(x)=f^\#(\om_N|x|^N),\;f_*(x)=f_\#(\om_N|x|^N),\;\text{for}\;x\in \Om^*,$$
    where $\om_N$ is the volume of the unit ball in $\R^N.$
\end{definition}

Now, we state the well-known Hardy-Littlewood and Polya-Sze\"{g}o inequality involving the Schwarz symmetrization.

\begin{lemma}[Hardy-Littlewood inequality]\label{Hardy_Littlewood}
    Let $f,g\in L^2(\Om)$. If $f_*$ denotes the increasing Schwarz symmetrization of $f$ and $g^*$ denotes the decreasing Schwarz symmetrization of $g,$ then 
    \begin{equation}
        \int_{\Om^*} f_*g^*\dx\leq \int_{\Om} fg\dx.
    \end{equation}
\end{lemma}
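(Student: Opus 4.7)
The plan is to prove the inequality via a layer-cake decomposition followed by a concentric-level-set comparison. First I would reduce to the case $f,g \ge 0$. For arbitrary $f,g \in L^2(\Om)$ (with $\Om$ bounded, hence $L^2 \subset L^1$), truncate to $f_n := \max\{\min\{f,n\},-n\}$ and $g_n$ similarly, and add a constant $c_n \ge n$ so that $f_n + c_n,\, g_n + c_n \ge 0$. Expanding $\int_\Om (f_n + c_n)(g_n + c_n)\,\dx$ and the analogous integral on $\Om^*$, all cross-terms produced by the shift agree on the two sides, using $|\Om^*| = |\Om|$ together with $\int_\Om h\,\dx = \int_{\Om^*} h^*\,\dx = \int_{\Om^*} h_*\,\dx$ (an immediate consequence of equimeasurability). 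Hence the inequality for the shifted pair is equivalent to the one for $(f_n, g_n)$. Since both rearrangements are continuous $L^2(\Om) \to L^2(\Om^*)$ and $f_n \to f$, $g_n \to g$ in $L^2$, passing $n \to \infty$ reduces everything to the case $f,g \ge 0$.

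For nonnegative $f,g$, Fubini applied to the layer-cake identity $fg = \int_0^\infty \!\int_0^\infty \chi_{\{f>s\}} \chi_{\{g>t\}}\,\ds\,\dt$ gives
\begin{equation*}
\int_\Om fg\,\dx \;=\; \int_0^\infty \!\int_0^\infty |\{f>s\} \cap \{g>t\}|\,\ds\,\dt,
\end{equation*}
and the analogous identity on $\Om^*$ for $\int_{\Om^*} f_* g^*\,\dx$. By equimeasurability, $|\{f_*>s\}| = |\{f>s\}|$ and $|\{g^*>t\}| = |\{g>t\}|$. The key geometric observation is that on $\Om^*$ the super-level set $\{g^*>t\}$ is a centered open ball, while $\{f_*>s\}$ is the complement in $\Om^*$ of a centered closed ball; being concentric, these two sets satisfy
\begin{equation*}
|\{f_*>s\} \cap \{g^*>t\}| \;=\; \max\bigl\{0,\; |\{f>s\}| + |\{g>t\}| - |\Om|\bigr\}.
\end{equation*}
For arbitrary measurable $A,B \subset \Om$, on the other hand, $|A \cup B| \le |\Om|$ combined with inclusion-exclusion yields $|A \cap B| \ge \max\{0, |A|+|B|-|\Om|\}$. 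Applied to $A=\{f>s\}$, $B=\{g>t\}$, this delivers the pointwise bound $|\{f_*>s\} \cap \{g^*>t\}| \le |\{f>s\} \cap \{g>t\}|$, and integrating in $(s,t)$ over $(0,\infty)^2$ closes the proof.

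The only genuinely delicate step is the reduction from signed $L^2$-functions to nonnegative ones; neither rearrangement is linear, so this requires the truncate-and-shift argument above rather than a direct decomposition into positive and negative parts. A cleaner alternative, which avoids the geometric computation altogether, is to invoke the classical decreasing--decreasing Hardy--Littlewood inequality $\int_\Om \phi\psi\,\dx \le \int_{\Om^*} \phi^*\psi^*\,\dx$, apply it with $\phi = -f$ and $\psi = g$, and use the identity $(-f)^* = -f_*$ (immediate from the definitions of increasing and decreasing rearrangement) to rewrite the right-hand side as $-\int_{\Om^*} f_* g^*\,\dx$. This yields the stated inequality in one line.
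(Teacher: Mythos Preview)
Your proposal is correct. The paper does not supply a proof at all; it simply cites Kesavan (Corollary~1.4.1 together with eq.~(1.3.3)) and Baernstein, and the content of those citations is exactly your second, one-line route: the classical decreasing--decreasing Hardy--Littlewood inequality combined with the identity $(-f)^{*}=-f_{*}$ (which is Kesavan's eq.~(1.3.3)). So that part of your proposal coincides with the paper's intended argument.

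Your first route, the self-contained layer-cake proof with the concentric-level-set computation, is a genuinely different and more elementary argument than anything the paper invokes. It has the advantage of being independent of the classical Hardy--Littlewood inequality and of making the geometric mechanism transparent: the increasing rearrangement pushes mass toward the boundary of $\Om^{*}$ while the decreasing rearrangement pushes it toward the center, and the extremal overlap formula $|A\cap B|\ge \max\{0,|A|+|B|-|\Om|\}$ is saturated precisely in this concentric configuration. The cost is the extra bookkeeping in the reduction to nonnegative functions, which your truncate-and-shift argument handles correctly (the cross terms cancel by equimeasurability and $|\Om|=|\Om^{*}|$, and rearrangements are nonexpansive in $L^{2}$ so the limit passes). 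Either approach is fine here; the paper's citation route is shorter once one is willing to import the standard result, while yours would be the right choice if one wanted the article to be self-contained on this point.
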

\begin{proof}
    The proof is a consequence of \cite[Corollary 1.4.1 and eq. (1.3.3)]{Kesavan} or \cite[Corollary 2.18]{Baernstein}.
\end{proof}

\begin{lemma}[Polya-Sze\"{g}o inequality]\label{polya}
    Let $f\in \Hsrn$. Then we have 
    $$[f^*]_s^2\leq [f]_s^2.$$
    Moreover, equality occurs in the above inequality if and only if f is a translate of a symmetric decreasing function.
\end{lemma}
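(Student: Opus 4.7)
The plan is to reduce the inequality to the classical Riesz rearrangement inequality by passing through the heat-kernel representation of the Gagliardo seminorm. I would first reduce to nonnegative $f$: since $f^{*}$ depends only on $|f|$ and the pointwise triangle inequality $\bigl||f(x)|-|f(y)|\bigr|\le|f(x)-f(y)|$ gives $[|f|]_s\le [f]_s$, it suffices to prove $[f^{*}]_s^2\le[|f|]_s^2$.

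For $f\ge 0$, the central tool is the identity
\begin{equation*}
[f]_s^2 \;=\; c_{N,s}\int_{0}^{\infty} t^{-1-s}\!\int_{\R^N}\!\int_{\R^N}(f(x)-f(y))^{2}\,p_{t}(x-y)\,\dx\,\dy\,\dt,
\end{equation*}
where $p_{t}(z)=(4\pi t)^{-N/2}e^{-|z|^{2}/(4t)}$ is the Gaussian heat kernel; this follows from the Gamma-function representation $|z|^{-N-2s}=c_{N,s}\int_{0}^{\infty}t^{-1-s}p_{t}(z)\,\dt$. Expanding the square and using $\int p_{t}=1$,
\begin{equation*}
\int\!\int (f(x)-f(y))^{2}p_{t}(x-y)\,\dx\,\dy \;=\; 2\,\|f\|_{L^{2}}^{2}\;-\;2\int\!\int f(x)f(y)\,p_{t}(x-y)\,\dx\,\dy.
\end{equation*}
Because $p_{t}$ is radially symmetric and strictly decreasing, the Riesz rearrangement inequality (applied with $p_{t}=p_{t}^{*}$) yields
\begin{equation*}
\int\!\int f(x)f(y)\,p_{t}(x-y)\,\dx\,\dy \;\le\; \int\!\int f^{*}(x)f^{*}(y)\,p_{t}(x-y)\,\dx\,\dy.
\end{equation*}
Combining with $\|f^{*}\|_{L^{2}}=\|f\|_{L^{2}}$ gives the pointwise-in-$t$ inequality
\begin{equation*}
\int\!\int (f^{*}(x)-f^{*}(y))^{2}p_{t}(x-y)\,\dx\,\dy \;\le\; \int\!\int (f(x)-f(y))^{2}p_{t}(x-y)\,\dx\,\dy,
\end{equation*}
and integrating against $t^{-1-s}\,\dt$ yields $[f^{*}]_s^2\le [f]_s^2$.

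For the rigidity statement, assume $[f^{*}]_s=[f]_s$. Then the time-integrated equality forces equality in the $t$-slice inequality, hence in the Riesz rearrangement step, for almost every $t>0$. Since $p_{t}$ is \emph{strictly} radially decreasing, the rigidity case of the Riesz rearrangement inequality due to Burchard (equivalently the characterization recorded in \cite[Theorem~A.1]{Frank2018}) implies that $|f|$ coincides almost everywhere with a translate of the symmetric decreasing rearrangement $f^{*}$. One then verifies that the reduction step $[|f|]_s=[f]_s$ forces $f$ not to change sign, so $f$ itself is a translate of a symmetric decreasing function.

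The main obstacle I anticipate is the rigidity step: the Riesz equality characterization is delicate (it requires controlling level-set geometry up to a common translation), and one must justify transferring equality from the time-integrated Gagliardo seminorm to the $t$-wise Riesz equality, which relies essentially on the strict monotonicity of $p_{t}$. The inequality half, by contrast, becomes routine once the heat-kernel identity is in hand.
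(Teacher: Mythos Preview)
The paper does not actually prove this lemma; it simply cites Almgren--Lieb for the inequality and \cite[Theorem A.1]{FrankSeiringer} for the equality case. Your heat-kernel/Riesz-rearrangement route is precisely the argument carried out in those references, so the proposal is correct and aligned with the cited literature. The only minor caveat is in the last line of your rigidity step: the equality $[|f|]_s=[f]_s$ forces $f(x)f(y)\ge 0$ for a.e.\ $(x,y)$, hence $f=\pm|f|$ a.e., and in the negative case $f$ is a translate of a symmetric \emph{increasing} (not decreasing) function; the lemma as stated thus tacitly assumes $f\ge 0$, which is harmless here since the paper only applies it to positive eigenfunctions.
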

\begin{proof}
    The first assertion follows from \cite[Theorem 9.2]{Almgren}. For the proof of the equality case, we refer the reader to \cite[Theorem A.1]{FrankSeiringer}.
\end{proof}

\subsection{Properties of the minimizers} Here, we state some properties of the optimal domains and the corresponding first eigenfunctions; cf. \cite{LeeLee}.
\begin{proposition}\label{prop:property}
    Let $c\in [0,|\Om|]$ and $\al>0$. Then the following holds true:
    \begin{enumerate}[(i)]
        \item There exists an optimal pair $(u,D)$ in \eqref{Lam_om} such that $u$ has a constant sign in $\Om.$ 

        \item The function $(\al,c)\mapsto \La_{\Om}(\al,c)$ is increasing in $(0,\infty)\times (0,|\Om|)$ with respect to both the variable $\al$ and $c.$
        
    \end{enumerate}
\end{proposition}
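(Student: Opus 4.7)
For the existence part of (i), my plan is to work with the variational formulation \eqref{la1} combined with a bang-bang reduction on the set $D$. Take a minimizing sequence $\{(u_n, D_n)\}$ with $u_n \in \xsom$, $\|u_n\|_{L^2(\Om)} = 1$, and
$$\frac{\cns}{2}[u_n]_s^2 + \al \into \chi_{D_n} |u_n|^2 \dx \;\longrightarrow\; \La_{\Om}(\al, c).$$
Boundedness of this energy bounds $[u_n]_s$, and the compact embedding $\xsom \embd L^2(\Om)$ yields a subsequence with $u_n \wra u$ in $\xsom$ and $u_n \to u$ in $L^2(\Om)$. The key observation is that, for fixed $v \in L^2(\Om)$, the minimum of $\into \chi_D v^2 \dx$ over $D \in \Ds_{c,\Om}$ is realized by a sub-level set of $v^2$. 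This lets me rewrite
$$\La_{\Om}(\al, c) = \inf_{u \in \xsom \setminus\{0\}} \frac{\frac{\cns}{2}[u]_s^2 + \al \int_0^c (u^2)_\#(\xi)\,d\xi}{\|u\|_{L^2(\Om)}^2},$$
so the nonlocal term depends only on the distribution of $u^2$. Since $v \mapsto \int_0^c (v^2)_\#(\xi)\,d\xi$ is continuous in $L^2$ while $[\cdot]_s^2$ is weakly lower semicontinuous, I pass to the limit and produce a minimizer $u$; an optimal $D$ is then chosen as a sub-level set of $u^2$ of measure $c$.

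For the sign property in (i), I would use the pointwise inequality $(u(x) - u(y))^2 \ge (|u(x)| - |u(y)|)^2$, which gives $[|u|]_s \le [u]_s$. Together with $\into \chi_D u^2 \dx = \into \chi_D |u|^2 \dx$ and $\||u|\|_{L^2(\Om)} = \|u\|_{L^2(\Om)}$, this shows the Rayleigh quotient of $|u|$ does not exceed that of $u$; by optimality, equality forces $(|u|, D)$ to be optimal as well, supplying a constant-sign minimizer.

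For (ii), the monotonicity statements are direct test-function comparisons using the pair produced in (i). Given $\al_1 < \al_2$ and an optimal pair $(u_2, D_2)$ for $\La_{\Om}(\al_2, c)$, part (i) and the strong maximum principle for $(-\De)^s$ yield $u_2 > 0$ in $\Om$, so $\int_{D_2} u_2^2 \dx > 0$. Plugging $u_2$ into the Rayleigh quotient at parameter $\al_1$ gives
$$\La_\Om(\al_1, c) \le \frac{\frac{\cns}{2}[u_2]_s^2 + \al_1 \int_{D_2} u_2^2 \dx}{\|u_2\|_{L^2(\Om)}^2} < \frac{\frac{\cns}{2}[u_2]_s^2 + \al_2 \int_{D_2} u_2^2 \dx}{\|u_2\|_{L^2(\Om)}^2} = \La_\Om(\al_2, c).$$
For $c_1 < c_2$, with $(u_2, D_2)$ optimal for $\La_\Om(\al, c_2)$, I pick a measurable $D_1 \subset D_2$ with $|D_1| = c_1$, so $|D_2 \setminus D_1| > 0$; using $u_2$ as a test function for $(\al, D_1)$ and invoking strict positivity of $u_2$ again yields $\La_\Om(\al, c_1) < \La_\Om(\al, c_2)$.

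The principal obstacle is the existence step: the sequence $\chi_{D_n}$ only converges weakly-$*$ in $L^\infty$ to some $\theta$ with $0 \le \theta \le 1$ and $\into \theta\dx = c$, which in general is not a characteristic function. The bang-bang identification of the optimal $D$ as a sub-level set of $u^2$ circumvents this by collapsing the minimization over $D$ into an explicit rearrangement functional of $u$; verifying its $L^2$-continuity and selecting a sub-level set of precisely measure $c$ (requiring care when level sets of $u^2$ have positive measure) are the delicate technical points. The sign and monotonicity conclusions are then routine consequences of the variational characterization, the Gagliardo-seminorm inequality for absolute values, and the strong maximum principle for the fractional Laplacian.
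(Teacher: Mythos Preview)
Your argument is essentially correct and, in fact, supplies considerably more detail than the paper does: the paper's own ``proof'' of this proposition consists entirely of citations to \cite[Theorem~1.1 and Lemma~4.3]{LeeLee}, with no argument reproduced. Your approach---bang-bang reduction of the inner optimization over $D$ to a sub-level set of $u^2$, direct method on the resulting functional of $u$ alone, and the inequality $[\,|u|\,]_s\le [u]_s$ for the sign property---is the natural route and is in line with what is carried out in the cited reference. The monotonicity arguments via test-function comparison are likewise standard.

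Two minor points worth tightening. First, the $L^2$-continuity of $v\mapsto\int_0^c (v^2)_\#(\xi)\,d\xi$ that you flag as delicate is in fact routine: $u_n\to u$ in $L^2(\Om)$ implies $u_n^2\to u^2$ in $L^1(\Om)$, and the increasing rearrangement is an $L^1$-contraction, so the integral over $[0,c]$ passes to the limit. Second, your appeal to the strong maximum principle for $(-\De)^s$ to obtain $u_2>0$ in $\Om$ is correct but not entirely immediate, since the equation has the potential term $\al\chi_{D_2}$; the clean way is to note that if $u_2\ge 0$ vanished at an interior point $x_0$, then the nonlocal expression for $(-\De)^s u_2(x_0)$ would be strictly negative while the right-hand side $(\La-\al\chi_{D_2}(x_0))u_2(x_0)$ vanishes. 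With these clarifications your proof stands.
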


\begin{proof}
     The proof of $(i)$ can be found in \cite[Theorem 1.1]{LeeLee} and $(ii)$ is proved in \cite[Lemma 4.3]{LeeLee}.
\end{proof}

\section{Proofs of the Main Results}\label{sec:Main}

\subsection{Proof of Faber-Krahn inequality} 
In this section, we give a proof of Theorem \ref{theo:Faber}. In the proof, we use standard symmetrization arguments along with the help of the structure of an optimal pair associated to \eqref{Lam_om}.

\begin{proof}[Proof of Theorem \ref{theo:Faber}]
   By hypothesis, we have $c\in (0,|\Om^*|),$ where $\Om^*$ is the ball centered at the origin such that $|\Om|=|\Om^*|,$ and $\al>0.$ Let $D\in \Ds_{c,\Om}$ be an optimal domain and $(u,D)$ be an optimal pair in \eqref{Lam_om}. Then, using the characterization \eqref{la1}, we have 
   \begin{equation}\label{Equal_om}
       \La_{\Om}(\al,c)=\la_{\Om}(\al,D)=\frac{\frac{\cns}{2}\uss+\al \into \chi_{D}|u|^2\dx}{\into |u|^2\dx}.
   \end{equation}
   Let $D_*\subset \Om^*$ be defined as below
   \begin{equation}
       \chi_{D_*}(x)=(\chi_D)_*(x),\;\text{for}\;x\in \Om^*.
   \end{equation}
Now, it is easy to observe that $|D_*|=|D|$ and hence, $D_*\in \Ds_{c,\Om^*}.$ Therefore, using Hardy-Littlewood (Lemma \ref{Hardy_Littlewood}) and Polya-Sze\"{g}o (Lemma \ref{polya}) inequality together in \eqref{Equal_om}, we derive
   \begin{align}\label{Var_ineq}
       \La_{\Om}(\al,c)=\frac{\frac{\cns}{2}\uss+\al \into \chi_{D}|u|^2\dx}{\into |u|^2\dx} & \geq \frac{\frac{\cns}{2}[u^*]_s^2+\al \displaystyle \int_{\Om^*} (\chi_{D})_*|u^*|^2\dx}{\displaystyle\int_{\Om^*} |u^*|^2\dx}\nonumber \\
       & \geq \la_{\Om^*}(\al,D_*)\geq \La_{\Om^*}(\al,c),
   \end{align}
   where the last two inequalities follow from the definitions of $\la_{\Om^*}(\al,D_*)$ and $\La_{\Om^*}(\al,c),$ respectively. Hence, it follows that $$\La_{\Om^*}(\al,c)\leq \La_{\Om}(\al,c).$$
   Next, we prove the equality case. If $\Om=\Om^*,$ then obviously we have the equality. Conversely, assume that $\La_{\Om}(\al,c)=\La_{\Om^*}(\al,c)$. Therefore, in view of  \eqref{Var_ineq}, we must have 
   \begin{align}
       \uss=[u^*]_s^2.
   \end{align}
   Thus, using the characterization of the equality case of Polya-Sze\"{g}o inequality (Lemma \ref{polya}), we conclude that $u=u^*$ (up to a translation) and consequently, $\Om=\Om^*$ (up to a translation). This completes the proof.
\end{proof}

\begin{remark}
    As highlighted in the introduction, to derive the equality case in the above Faber-Krahn inequality, we use the characterization of the equality case in fractional Polya-Sze\"{g}o inequality; cf. \cite[Theorem A.1]{FrankSeiringer}. In particular, this provides a direct and easier proof as compared to the local case; cf. \cite[Theorem 1.1]{Vecchi2019}. 
\end{remark}

\subsection{Proof of Lieb inequality}\label{sec:Lieb} 
In this section, we establish Theorem \ref{theo:Lieb}. First, we state a technical lemma that is inspired by the proof of \cite[Theorem 1.2]{Duong}. However, we derive an improved version that actually gives equality. Moreover, the equality indeed has its own importance as it provides a non-local analogue of the identity (cf. \cite[eq. (1.12)]{Lieb1983}) used by Lieb \cite{Lieb1983} for the Laplacian that follows from the product rule. The proof will be based on similar arguments as used in \cite[Theorem 1.2]{Duong} but with some additional observations that lead to equality. We supply a proof here to distinguish the equality case.

\begin{lemma}\label{Estimate}
    Let $u_i\in H^s(\R^N),$ for $i=1,2,$ be such that they are non-negative a.e. For $x\in \R^N$, define $u_x:\R^N\ra \R$ as follows:
\begin{equation}
    u_x(y)=u_1(y)u_2(y-x),\;\text{where}\;y\in \R^N.
\end{equation} 
Then we have the following:
$$\intRn [u_x]_s^2\dx = [u_1]_s^2||u_2||^2_{L^2(\R^N)}+[u_2]_s^2||u_1||^2_{L^2(\R^N)}.$$
In particular, if $||u_i||_{L^2(\R^N)}=1,$ for $i=1,2,$ it holds
$$\intRn [u_x]_s^2\dx = [u_1]_s^2+[u_2]_s^2.$$
\end{lemma}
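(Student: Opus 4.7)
The plan is to compute the left-hand side directly via an algebraic decomposition of $|u_x(y)-u_x(z)|^2$ combined with Fubini, following the strategy of \cite{Duong} but tracking the cross term carefully so as to obtain an identity rather than only an inequality.

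First I would use the elementary splitting
\begin{equation*}
u_x(y)-u_x(z)=u_1(y)\bigl[u_2(y-x)-u_2(z-x)\bigr]+u_2(z-x)\bigl[u_1(y)-u_1(z)\bigr]
\end{equation*}
and square it, obtaining two diagonal terms plus a cross term. Since $u_1,u_2\geq 0$ a.e., every integrand in the ensuing triple integral over $(x,y,z)$ is non-negative, which allows me to apply Fubini freely throughout.

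Next I would integrate in $x$ first. For the diagonal term involving $\bigl(u_2(y-x)-u_2(z-x)\bigr)^2$, the substitution $w=z-x$ shows that the $x$-integral depends only on $y-z$ and equals $F(y-z):=\int_{\R^N}\bigl(u_2(w+(y-z))-u_2(w)\bigr)^2\,dw$; the remaining double integral against $u_1(y)^2/|y-z|^{N+2s}$ then collapses, via another Fubini and the change $h=y-z$, to $\|u_1\|_{L^2(\R^N)}^2\,[u_2]_s^2$. The other diagonal term directly yields $\|u_2\|_{L^2(\R^N)}^2\,[u_1]_s^2$ by translation invariance, since $\int u_2(z-x)^2\,dx=\|u_2\|_{L^2(\R^N)}^2$.

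The final step, and the main obstacle, is handling the cross term. Using the identity $a(a-b)=\tfrac{1}{2}(a^2-b^2)-\tfrac{1}{2}(a-b)^2$ with $a=u_2(z-x)$ and $b=u_2(y-x)$, the $x$-integral of the cross term splits into one piece that vanishes, because $\int u_2(y-x)^2\,dx=\int u_2(z-x)^2\,dx$, and another piece proportional to $F(y-z)$. The residual contribution, of the form $\iint u_1(y)\bigl(u_1(y)-u_1(z)\bigr)F(y-z)/|y-z|^{N+2s}\,dy\,dz$, must then be symmetrised in $(y,z)$ to extract the cancellation that promotes Duong's inequality into the announced equality. This symmetrisation is the nonlocal analogue of the cross-derivative cancellation underlying Lieb's product-rule identity \cite[eq.~(1.12)]{Lieb1983} and is, I expect, the subtlest step of the argument.
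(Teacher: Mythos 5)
Your decomposition and your treatment of the two diagonal terms are correct, and you have rightly identified the cross term as the crux; but the final step fails: the symmetrisation does \emph{not} produce the cancellation you anticipate. Carry your own computation to the end. Set $F(h):=\int_{\R^N}\bigl(u_2(w+h)-u_2(w)\bigr)^2\,{\rm d}w$. The $x$-integral of the cross term $2u_1(y)\bigl(u_1(y)-u_1(z)\bigr)u_2(z-x)\bigl(u_2(y-x)-u_2(z-x)\bigr)$ equals $-u_1(y)\bigl(u_1(y)-u_1(z)\bigr)F(y-z)$, because $\int_{\R^N} u_2(y-x)u_2(z-x)\,\dx-\|u_2\|_{L^2(\R^N)}^2=-\tfrac12F(y-z)$. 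Symmetrising the resulting double integral in $(y,z)$ (using that $F$ is even) yields
\begin{equation*}
-\frac12\iint_{\R^N\times\R^N}\frac{\bigl(u_1(y)-u_1(z)\bigr)^2\,F(y-z)}{|y-z|^{N+2s}}\,\dy\,\dz,
\end{equation*}
which is non-positive and, for non-trivial $u_1,u_2$, strictly negative: $F(h)>0$ for every $h\neq0$ (a non-zero $h$-periodic function cannot lie in $L^2(\R^N)$), and $u_1$ cannot be a.e.\ constant. So your route, executed carefully, proves only
\begin{equation*}
\intRn [u_x]_s^2\,\dx \;\le\; [u_1]_s^2\,\|u_2\|^2_{L^2(\R^N)}+[u_2]_s^2\,\|u_1\|^2_{L^2(\R^N)},
\end{equation*}
with an explicit non-negative deficit --- that is, Duong's inequality, not the announced identity. (The same conclusion follows in Fourier variables: the left-hand side is a constant times $\iint|\zeta+\eta|^{2s}|\hat u_1(\zeta)|^2|\hat u_2(\eta)|^2\,{\rm d}\zeta\,{\rm d}\eta$, the right-hand side replaces $|\zeta+\eta|^{2s}$ by $|\zeta|^{2s}+|\eta|^{2s}$, and these differ on a set of positive measure.)

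For comparison, the paper's proof obtains the equality by asserting $\intRn u_2(z-x)u_2(y-x)\,\dx=\|u_2\|^2_{L^2(\R^N)}$ ``by the equality case in H\"older's inequality''; but this integral is the autocorrelation of $u_2$ at lag $y-z$, and Cauchy--Schwarz gives only ``$\le$'', with equality forcing $u_2(\cdot)=u_2(\cdot+(y-z))$ a.e., which is impossible for $0\neq u_2\in L^2(\R^N)$ unless $y=z$. Your more careful bookkeeping therefore exposes a genuine problem with the lemma as stated rather than a removable difficulty in your argument: only the inequality is available. Note that the inequality version is all that is used downstream --- Step II of the proof of Theorem \ref{theo:Lieb} only requires $\intRn W(x)\,\dx\le\La$ --- so Theorem \ref{theo:Lieb} survives with the corrected, weaker form of the lemma.
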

\begin{proof}
    Let $x\in \R^N$ be fixed. From the definition of $u_x,$ we get
\begin{align}
    [u_x]_s^2 &=\intRn\intRn \frac{|u_1(y)u_2(y-x)-u_1(z)u_2(z-x)|^2}{|y-z|^{N+2s}}\dy\dz\\
    &= \intRn\intRn \frac{\left\{(u_1(y)-u_1(z))u_2(y-x)-u_1(z)(u_2(z-x)-u_2(y-x))\right\}^2}{|y-z|^{N+2s}}\dy\dz\\
    &= \underbrace{\intRn\intRn \frac{(u_1(y)-u_1(z))^2 (u_2(y-x))^2}{|y-z|^{N+2s}}\dy\dz}_{:=\J_1(x)} \\
    & \hspace{1cm} + \underbrace{2 \intRn\intRn \frac{\left\{u_1(z)(u_1(y)-u_1(z))u_2(y-x)(u_2(z-x)-u_2(y-x)) \right\}^2}{|y-z|^{N+2s}}\dy\dz}_{:=\J_2(x)}\\
    & \hspace{2cm} + \underbrace{\intRn\intRn \frac{(u_1(z)(u_2(z-x)-u_2(y-x))^2}{|y-z|^{N+2s}}\dy\dz}_{:=\J_3(x)}.\label{u_x}
\end{align}
Let us analyse each of $\J_1(x),\J_2(x)$ and $\J_3(x)$ separately. We have 
\begin{align}\label{J_1}
    \intRn \J_1(x)\dx=\intRn\intRn\intRn \frac{(u_1(y)-u_1(z))^2 (u_2(y-x))^2}{|y-z|^{N+2s}}\dy\dz\dx=[u_1]_s^2 ||u_2||^2_{L^2(\R^N)},
\end{align}
where we use Fubini's theorem to obtain the last equality. Next, by similar arguments after making a change of variable, one can easily obtain
\begin{align}\label{J_3}
    \intRn\J_3(x)\dx=[u_2]_s^2||u_1||^2_{L^2(\R^N)}.
\end{align}
Now, we claim that 
\begin{equation}\label{J_2}
    \intRn\J_2(x)\dx= 0.
\end{equation}
Indeed, again applying Fubini's theorem, we get
\begin{align}
    \intRn\J_2(x)\dx &=2\intRn \intRn\intRn \frac{\left\{u_1(z)(u_1(y)-u_1(z))u_2(y-x)(u_2(z-x)-u_2(y-x)) \right\}^2}{|y-z|^{N+2s}}\dy\dz\dx\\
    & =2\intRn\intRn \frac{u_1(z)(u_1(y)-u_1(z))}{|y-z|^{N+2s}} \left\{\underbrace{\intRn u_2(y-x)(u_2(z-x)-u_2(y-x))\dx}_{:=\J} \right\}\dy\dz.
\end{align}
Now,
\begin{align}
    \J &=\intRn u_2(y-x)(u_2(z-x)-u_2(y-x))\dx\\
    &=\intRn u_2(z-x)u_2(y-x)\dx- \intRn (u_2(y-x))^2\dx = ||u_2||_{L^2(\R^N)}^2-||u_2||_{L^2(\R^N)}^2=0,
\end{align}
where we use the fact $\intRn u_2(z-x)u_2(y-x)\dx=||u_2||_{L^2(\R^N)}^2$ that comes from the equality case in the H\"{o}lder's inequality. Therefore, the claim \eqref{J_2} follows immediately. Using \eqref{J_1}, \eqref{J_3} and \eqref{J_2} altogether in \eqref{u_x}, we obtain
$$\intRn [u_x]_s^2\dx = [u_1]_s^2||u_2||^2_{L^2(\R^N)}+[u_2]_s^2||u_1||^2_{L^2(\R^N)},$$
as claimed in the statement. The proof is finished.
\end{proof}

Next, we prove Theorem \ref{theo:Lieb}.
\begin{proof}[Proof of Theorem \ref{theo:Lieb}] By assumption, $\La_{\Om_i}(\al_i,c_i)$ ($i=1,2$) is achieved for $D_i$. Let $u_i\in H^s_0(\Om_i),$ for $i=1,2,$ be a positive, $L^2$-normalized eigenfunction of \eqref{Problem} corresponding to $\La_{\Om_i}(\al_i,c_i),$ i.e., $(u_i,D_i)$ is an optimal pair for \eqref{Lam_om} on $\Om_i$ such that
\begin{equation}\label{ui_norm}
    ||u_i||_{L^2(\Om_i)}=1.
\end{equation} 
Thus we have
\begin{equation}\label{Var_i}
    \La_{\Om_i}(\al_i,c_i)=\frac{\cns}{2}[u_i]_s^2+\al_i \int_{\Om_i} \chi_{D_i}|u_i|^2\dx,\;\text{for}\;i=1,2.
\end{equation}
For $x\in \R^N$, we define $u_x:\R^N\ra \R$ as follows:
\begin{equation}
    u_x(y)=u_1(y)u_2(y-x),\;y\in \R^N.
\end{equation}
We divide the proof in several steps.\\
\underline{\textit{Step-I}}: Our aim is to show that $u_x\in H^s_0(\Om_1\cap\Om_{2,x}),$ for almost all $x\in \R^N.$ Observe that $u_x$ is well defined. Let 
\begin{equation}\label{Ux}
U(x):=\int_{\R^N}|u_x(y)|^2\dy,\;\text{for}\;x\in\R^N.
\end{equation}
Then using Fubini's theorem and \eqref{ui_norm}, we get 
\begin{align}
||U||_{L^1(\R^N)} &=\int_{\R^N}\left(\int_{\R^N}|u_x(y)|^2\dy\right)\dx\\
&=\int_{\R^N} |u_1(y)|^2 \left(\int_{\R^N} |u_2(y-x)|^2\dx\right)\dy=1.\label{U_x_1}
\end{align}
Therefore, we must have $U(x)<\infty,$ for almost all $x\in\R^N$ and consequently, one has $u_x\in L^2(\R^N),$ for almost all $x\in\R^N.$ Recall that, for $x\in \R^N$, 
\begin{equation}
     [u_x]_s=\left(\intRn\intRn \frac{|u_x(y)-u_x(z)|^2}{|y-z|^{N+2s}}\dy\dz\right)^{\frac{1}{2}}.
\end{equation}
Observe that the function $x\mapsto [u_x]_s$ belongs to $L^2(\R^N);$ thanks to Lemma \ref{Estimate}. Indeed, applying Lemma \ref{Estimate} along with \eqref{ui_norm}, one has
\begin{align}\label{ux_1}
    \int_{\R^N}[u_x]_s^2\dx= [u_1]_s^2+[u_2]_s^2<\infty.
\end{align}
Moreover, since $u_i=0$ on $\Om_i^c$ ($i=1,2$), it gives $u_x(y)=0,$ for all $y\in (\Om_x)^c,$ where
$$\Om_x:=\Om_1\cap \Om_{2,x}.$$
As a result, we have $u_x\in H^s_0(\Om_x),$ for almost all $x\in \R^N.$ \\
\underline{\textit{Step-II}}: Since $D_1\cap D_{2,x}\subset \Om_x,$ Fubini's theorem together with \eqref{ui_norm} imply  
\begin{align}
    \intRn &\left(\int_{\Om_x}\chi_{D_1\cap D_{2,x}(y)}|u_x(y)|^2\dy\right) \dx \\
    & =\intRn\intRn \chi_{D_1}(y)\chi_{D_2}(y-x)(u_1(y))^2 (u_2(y-x))^2\dy\dx\\
    & = \int_{\R^N} \chi_{D_1}(y) (u_1(y))^2 \left(\int_{\R^N}\chi_{D_2}(y-x)(u_2(y-x))^2\dx \right)\dy \\
    & = \left(\int_{D_1} (u_1(y))^2\dy \right)\left(\int_{D_2}(u_2(z))^2\dz \right)\\
    &= \left[\frac{\al_1}{\al_1+\al_2}+\frac{\al_2}{\al_1+\al_2}\right]\left(\int_{D_1} (u_1(y))^2\dy \right)\left(\int_{D_2}(u_2(z))^2\dz \right)\\
    &\leq \frac{\al_1}{\al_1+\al_2}\left(\int_{D_1} (u_1(y))^2\dy \right)\left(\int_{\Om_2}(u_2(z))^2\dz \right) \\
    & \hspace{4cm} + \frac{\al_2}{\al_1+\al_2}\left(\int_{\Om_1} (u_1(y))^2\dy \right)\left(\int_{D_2}(u_2(z))^2\dz \right)\\
    &=\frac{\al_1}{\al_1+\al_2}\int_{\Om_1} \chi_{D_1}(y)(u_1(y))^2\dy+ \frac{\al_2}{\al_1+\al_2} \int_{\Om_2}\chi_{D_2}(y)(u_2(z))^2\dz.\label{Last_term}
\end{align}
Let 
\begin{equation}
    W(x):=\frac{\cns}{2} [u_x]_s^2+ (\al_1+\al_2)\int_{\Om_x}\chi_{D_1\cap D_{2,x}}(y)|u_x(y)|^2\dy,\;\text{for}\;x\in\R^N.
\end{equation}
Then using \eqref{ux_1}, \eqref{Last_term} and the characterizations \eqref{Var_i} of $\La_{\Om_i}(\al_i,c_i)$ $(i=1,2)$, we obtain
\begin{align}
    \intRn W(x)\dx & \leq \frac{\cns}{2}[u_1]_s^2 +\al_1 \int_{\Om_1} \chi_{D_1}(y)(u_1(y))^2\dy  \\
    & \hspace{2cm} + \frac{\cns}{2} [u_2]_s^2 + \al_2 \int_{\Om_2}\chi_{D_2}(y)(u_2(z))^2\dz\\
    & = \La_{\Om_1}(\al_1,c_1)+\La_{\Om_2}(\al_2,c_2):=\La\;\text{(say)}.
\end{align}
Therefore, in view of \eqref{U_x_1}, it holds 
\begin{equation}\label{WU}
    \intRn (W(x)-\La U(x))\dx\leq 0.
\end{equation}
\underline{\textit{Step-III}}: We show that
$W(x)\neq \La U(x),\;\text{for almost all}\;x\in \R^N.$
On contrary, let us assume that $W(x)=\La U(x),$ for almost all $x\in\R^N.$ First, we claim that $U>0$ on some open set. Let $\widetilde{\chi}(x):=(\chi_{\Om_1}\ast \chi_{\Om_2})(x)=|\Om_x|.$ Then $\widetilde{\chi}\in \C_c(\R^N)$ (continuous function with compact support) and hence for given $\ep>0,$ there exists an open set $A_\ep$ such that $0<\widetilde{\chi}(x)<\ep,$ for all $x\in A_\ep,$ i.e.,
\begin{align}
    0<|\Om_x|<\ep,\;\text{for all}\;x\in A_\ep.\label{Om_ep}
\end{align} 
Moreover, for each fixed $x\in A_\ep,$ we have $u_1(y)>0$ and $u_2(y-x)>0,$ for all $y\in \Om_x.$ Therefore, we must have $U>0$ on $A_\ep.$ Suppose $c_x:=|D_1\cap D_{2,x}|.$ Recall that  $u_x\in H^s_0(\Om_x),$ for almost all $x\in \R^N.$ Hence, making use of the variational characterizations \eqref{Lam_om} and \eqref{la1} of $\La_{\Om_x}(\al_1+\al_2,c_x)$ and $\la_{\Om_x}(\al_1+\al_2,D_1\cap D_{2,x})$, respectively, we get
\begin{align}
    \La_{\Om_x}(\al_1+\al_2,c_x) & \leq \la_{\Om_x}(\al_1+\al_2,D_1\cap D_{2,x})\\
    & \leq \frac{\frac{\cns}{2} [u_x]_s^2+ (\al_1+\al_2)\int_{\Om_x}\chi_{D_1\cap D_{2,x}(y)}|u_x(y)|^2\dy}{\int_{\Om_x}|u_x(y)|^2\dy}\\
    & =\frac{W(x)}{U(x)}=\La,\;\text{for almost all}\;x\in A_\ep.\label{LaLa}
\end{align}
Let $\la_1(\Om_x)$ be the first Dirichlet eigenvalue of the fractional Laplacian on $\Om_x,$ i.e., $\la_1(\Om_x)$ is the first eigenvalue of 
\begin{align}
    (-\De)^s u &=\la u \quad\; \text{in}\; \Om_x,\\
    u &=0\quad\;\;\; \text{on}\; \Om_x^c.
\end{align}
Then, with the help of the variational characterization of the first eigenvalue, it implies: for each $D\in \Ds_{c_x,\Om_x},$
\begin{align}
    \la_1(\Om_x) &=\inf_{u\in H^s_0(\Om_x)\setminus \{0\}}\left\{\frac{\frac{\cns}{2}\uss}{\int_{\Om_x} |u|^2\dx}\right\} \\
    &\leq \inf_{u\in H^s_0(\Om_x)\setminus \{0\}}\left\{\frac{\frac{\cns}{2}\uss+(\al_1+\al_2) \int_{\Om_x} \chi_{D}|u|^2\dx}{\int_{\Om_x} |u|^2\dx}\right\}.
\end{align}
Consequently, we get $\la_1(\Om_x)\leq \La_{\Om_x}(\al_1+\al_2,c_x)$ and hence, using classical fractional Faber-Krahn inequality \cite[Section 5.2.1 or Theorem 5.2.4]{Frank2018} we arrive at
\begin{equation}
    \frac{C(\Om_x, s)}{|\Om_x|^{\frac{2s}{N}}} \leq \la_1(\Om_x)\leq \La_{\Om_x}(\al_1+\al_2,c_x),\label{Frac_Faber}
\end{equation}
where $C(\Om_x, s)$ is a constant that depends only on $\Om_x$ and $s.$ Now, using both \eqref{Om_ep} and \eqref{LaLa} in \eqref{Frac_Faber} yields
\begin{align}
    \frac{C(\Om_x, s)}{\ep^{\frac{2s}{N}}} \leq \La,
\end{align}
which is a contradiction as $\ep>0$ is arbitrary. Therefore, we must have $W(x)\neq \La U(x),$ for almost all $x\in \R^N.$ Thus in view of \eqref{WU}, there exists a set $\A\subset \R^N$ of positive measure so that 
\begin{equation}
    W(x)< \La U(x),\;\text{for almost all}\;x\in \A.
\end{equation}
Hence, from \eqref{LaLa}, one has
\begin{equation}
    \La_{\Om_x}(\al_1+\al_2,c_x)\leq \frac{W(x)}{U(x)}<\La,\;\text{for almost all}\;x\in \A.
\end{equation}
Finally, Proposition \ref{prop:property}-$(ii)$ implies $\La_{\Om_x}(\al,c)\leq \La_{\Om_x}(\al_1+\al_2,c_x),$ for all $\al\in (0,\al_1+\al_2)$ and $c\in (0,c_x).$ Therefore, recalling the definition of $\Om_x$, $c_x$ and $\La,$ we get
$$\La_{\Om_1\cap \Om_{2,x}}(\al,c)< \La_{\Om_1}(\al_1,c_1)+\La_{\Om_2}(\al_2,c_2),\;\text{for all}\;\al\in (0,\al_1+\al_2)\;\text{and}\;c\in (0,|D_1\cap D_{2,x}|).$$
This concludes the proof.

\end{proof}

\begin{remark}(Extension to mixed local-non local operator) We provide some comments regarding the extensions of Faber-Krahn and Lieb inequalities presented in Theorem \ref{theo:Faber} and \ref{theo:Lieb}, respectively, to more general operators. In \cite[Theorem 1.1]{ValdinociVecchi}, the authors established the Faber-Krahn inequality for the first eigenvalue of the mixed local-non local operator $-\De+(-\De)^s$. For such an operator, one can establish an analogue of Theorem \ref{theo:Faber} and \ref{theo:Lieb} by following the similar set of arguments as used in our proofs.
    
\end{remark}

\section{Acknowledgements}

The author is thankful to TIFR Centre for Applicable Mathematics (TIFR-CAM) for the financial support and for providing various academic resources.

\bibliographystyle{abbrvurl}
\bibliography{Reference}
\end{document}